 \def\norm#1{\|#1\|}
\def\H{{\cal H}}
\def\H1{H^1(\R)}
\newcommand{\les}{\lesssim}
 \newcommand{\R}{\mathbb{R}}
\newcommand{\al}{\alpha} 
\newcommand{\e}{\varepsilon} 
 \newcommand{\la}{\lambda}
\newcommand{\p}{\partial} 
\newcommand{\im}{\mathop{\mathrm{Im}}}
 \newcommand{\Del}[1]{}
\numberwithin{equation}{section}
\newtheorem{thm}{Theorem}[section] 
\newtheorem{lem}[thm]{Lemma}
\theoremstyle{remark} 
\newtheorem*{exam*}{Examples}
\begin{document}

\setcounter{page}{1}

\title[GWP for DNLS]{Global well-posedness for the derivative nonlinear 
Schr\"{o}dinger equation in $H^{\frac 12} (\R)$}

\author[Z. Guo]{Zihua Guo}
\address{School of Mathematical Sciences, Monash University, VIC 3800, Australia \& LMAM, School of Mathematical Sciences, Peking
University, Beijing 100871, China}
\email{zihua.guo@monash.edu}

\author{Yifei Wu}
\address{Center for Applied Mathematics, Tianjin University, Tianjin 300072, China}
\email{yerfmath@gmail.com}
\subjclass[2010]{Primary  35Q55; Secondary 35A01}


\keywords{Nonlinear Schr\"{o}dinger equation with derivative,
global well-posedness, low regularity}

\maketitle

\begin{abstract}\noindent
We prove that the derivative nonlinear 
Schr\"{o}dinger equation is globally well-posed in $H^{\frac 12} (\R)$ when the mass of initial data 
is strictly less than $4\pi$. 
\end{abstract}

\section{Introduction}
In this note, we study the Cauchy problem to the derivative nonlinear Schr\"odinger equation (DNLS):
\begin{align}\label{eq:DNLS}
\begin{split}
i\partial_{t}u+\partial_{x}^{2}u=&i\partial_x(|u|^2u),\qquad t\in \R,\ x\in \R,\\
u(0,x) =&u_0(x).
\end{split}
\end{align}
This equation was derived by \cite{MioOgino76, Mjolhus76} for studying the propagation of the circular polarised nonlinear Alfv\'en waves in plasma, and has been extensively studied since then.  It is well-known that \eqref{eq:DNLS} is completely integrable (see \cite{KaupNewell78,ko,tsuchida}), and thus has infinite number of conservation laws.  In particular, in this paper we will use the following three conservation laws: if $u$ is a $H^1$-solution of \eqref{eq:DNLS} then
\begin{align*}
M_D(u):=&\int_{\R}|u|^2dx=M_D(u_0),\\
E_D(u):=&\int_{\R}|u_x|^2+\frac{3}{2}|u|^2\im (u\bar u_x)+\frac{1}{2}|u|^6dx=E_D(u_0),\\
P_D(u):=&\int_{\R}\im(\bar uu_x)-\frac{1}{2}|u|^4dx=P_D(u_0).
\end{align*}

Equation \eqref{eq:DNLS} has been extensively studied. On the well-posedness,
Hayashi and Ozawa \cite{Ha-93-DNLS, HaOz-92-DNLS, HaOz-94-DNLS, Oz-96-DNLS} proved local well-posedness in $H^1(\R)$, and moreover global well-posedness for initial data in $H^1$ satisfying
\begin{align}\label{eq:massth1}
\int_{\R} |u_0(x)|^2dx<2\pi.
\end{align}
The condition above appears naturally in the sharp Galiardo-Nirenberg inequality to ensure an apriori estimate of $H^1$-norm by mass and energy conservation. Later, Local well-posedness in $H^s$ for $s\geq 1/2$ was obtained by Takaoka \cite{Ta-99-DNLS-LWP}, and this result is sharp in the sense that the solution map fails to be uniformly continuous in a ball of $H^s$ if $s<1/2$. Low regularity global well-posedness was also studied, for example, global well-posedness in $H^s(\R)$ under \eqref{eq:massth1} was obtained in \cite{Ta-01-DNLS-GWP, CKSTT-01-DNLS, CKSTT-02-DNLS} for $s>1/2$, and finally in \cite{Miao-Wu-Xu:2011:DNLS} for $s=1/2$. On the long-time behavior and modified scattering theory, see \cite{GHLN} and references therein.

A natural question is whether blowup occurs for \eqref{eq:DNLS}. To the authors' knowledge, this problem is still open. See \cite{LSS} for a numerical blowup analysis on a class of DNLS. Recently, the second author \cite{Wu2} showed the global well-posedness in $H^1$ under a weaker condition
\begin{align}\label{eq:massth2}
\int_{\R} |u_0(x)|^2dx<4\pi,
\end{align}
improving his previous result \cite{Wu1}. This result shows a striking difference between DNLS and other mass critical equations like focusing generalized KdV and quintic focusing nonlinear Schr\"odinger equation. The key ingredient is the use of the momemtum conservation.  
   
The purpose of this paper is to prove the low-regularity global well-posedness under \eqref{eq:massth2}.
The main result is

\begin{thm}\label{thm:main}
The Cauchy problem \eqref{eq:DNLS} is global well-posed in $H^\frac12(\R)$ under \eqref{eq:massth2}.
\end{thm}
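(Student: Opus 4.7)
The natural strategy is to combine the I-method of Colliander--Keel--Staffilani--Takaoka--Tao with the second author's momentum-based $H^1$ bound. Since local well-posedness in $H^{1/2}$ is already established by Takaoka and since the condition $\|u_0\|_{L^2}^2<4\pi$ is preserved by the flow (mass conservation), the only missing ingredient is an a priori bound on $\|u(t)\|_{H^{1/2}}$ on an arbitrarily long time interval. My plan is to mollify the $H^{1/2}$ data with an $I$-operator to lift it to an $H^1$ function, apply the $H^1$ global theory of \cite{Wu2} for this mollified object, and show that the relevant ``modified'' conservation laws decay slowly enough in $N$ to be iterated.

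\textbf{Step 1: gauge and the $I$-operator.} First I would pass to the gauge-equivalent equation used in \cite{Ha-93-DNLS, HaOz-92-DNLS}, which replaces the derivative nonlinearity $i\p_x(|u|^2u)$ by a cubic nonlinearity of the form $|v|^2 v_x - i|v|^4 v$-type; this is standard and well-adapted both to the $H^{1/2}$ well-posedness theory of Takaoka and to Wu's a priori argument. Then I would introduce the Fourier multiplier $I_N$ with smooth, radially decreasing symbol $m_N(\xi)$ equal to $1$ for $|\xi|\le N$ and to $(N/|\xi|)^{1/2}$ for $|\xi|\ge 2N$, so that
\[
 \|I_N v\|_{H^1(\R)} \lesssim N^{1/2}\|v\|_{H^{1/2}(\R)},\qquad \|I_N v\|_{L^2}\le \|v\|_{L^2}.
\]
The second inequality is crucial: it guarantees that the mass condition \eqref{eq:massth2} passes to $I_N v$, so that Wu's $H^1$ argument is applicable \emph{to the mollified solution}.

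\textbf{Step 2: almost-conservation of a modified energy and modified momentum.} Let $E(I_Nv)$ and $P(I_Nv)$ denote the expressions obtained by substituting $I_Nv$ into $E_D$ and $P_D$. These are no longer exactly conserved, and the main technical task is to prove two ``almost-conservation'' estimates of the form
\[
 \sup_{t\in[0,\de]}\bigl(|E(I_Nv)(t)-E(I_Nv)(0)|+|P(I_Nv)(t)-P(I_Nv)(0)|\bigr)\lesssim N^{-\al}\|I_Nv\|_{X^{1,b}([0,\de])}^{k}
\]
for some $\al>0$, on a local interval $[0,\de]$ whose length depends only on $\|I_Nv_0\|_{H^1}$. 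These are multilinear Fourier estimates in the spirit of \cite{CKSTT-01-DNLS, CKSTT-02-DNLS, Miao-Wu-Xu:2011:DNLS}: one writes $E(I_Nv)(t)-E(I_Nv)(0)$ as a space-time integral of a multilinear expression in which the symbol $1-\prod m_N$ vanishes whenever all frequencies are below $N$, and then uses $X^{s,b}$ bilinear/trilinear estimates together with the sharp $N$-dependence of $m_N$ to extract a power of $N^{-\al}$.

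\textbf{Step 3: uniform-in-time bound and iteration.} With the two almost-conservation laws in hand, I would close the argument as follows. By \cite{Wu2}, the mass, energy, and momentum together with \eqref{eq:massth2} control $\|I_Nv\|_{H^1}$ by a quantity depending only on $M_D(I_Nv_0),E(I_Nv_0),P(I_Nv_0)$. After $K$ iterations of Step~2, the modified energy and momentum have drifted by at most $KN^{-\al}$; choosing $N=N(T)$ as a sufficiently large power of the target time $T$, one keeps $KN^{-\al}$ small, hence $\|I_Nv\|_{H^1}\lesssim 1$ on $[0,T]$, which in turn yields $\|v\|_{H^{1/2}}\lesssim N^{-1/2}\|I_Nv\|_{H^1}\lesssim 1$. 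Inverting the gauge gives the desired a priori bound on $\|u\|_{H^{1/2}}$ and, combined with Takaoka's local theory, the global well-posedness.

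\textbf{Main obstacle.} The delicate point is the momentum increment estimate. Unlike for the $<2\pi$ threshold, where mass plus energy suffice, here Wu's lower bound on $\|v_x\|_{L^2}^2$ requires the momentum, so an \emph{almost}-conservation of $P_D(I_Nv)$ at the $H^{1/2}$ level is unavoidable. Since $P_D$ is only a first-order quantity, one must be careful that the multilinear $|u|^4$ contribution and the quadratic derivative contribution do not spoil the $N^{-\al}$ gain; this likely forces the use of the gauge-transformed equation (so that the resonant part of $\p_x(|u|^2u)$ is tamed) together with refined bilinear $L^2$-type estimates near the diagonal, which is where I expect the bulk of the technical work to lie.
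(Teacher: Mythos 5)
Your overall strategy (gauge transform, $I$-operator, momentum-based $H^1$ control under the $4\pi$ threshold, iteration of almost-conservation) is the same as the paper's, but you diverge at exactly the point you flag as the ``main obstacle'', and there the paper takes a genuinely simpler route. You propose to prove an almost-conservation law for the modified momentum $P(I_Nv)$ by multilinear $X^{s,b}$ space-time estimates, parallel to the energy increment. The paper never does this: since the momentum functional is already finite and conserved at the $H^{1/2}$ level, it suffices to compare $P(I v_\lambda(t))$ with the \emph{exactly conserved} $P(v_\lambda(t))$ at each fixed time, which is an elementary static estimate, $|P(Iv_\lambda)-P(v_\lambda)|\lesssim N^{-1}(\|Iv_\lambda\|_{H^1}^2+\|Iv_\lambda\|_{H^1}^4)$ (Lemma \ref{lem:MM}); combined with the rescaling $\lambda\sim N$, which makes $P(v_{0,\lambda})\sim N^{-1}P(v_0)$ small, this gives $|P(Iv_\lambda(t))|\lesssim N^{-1}$ with no new space-time multilinear analysis. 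Only the energy increment (imported from \cite{Miao-Wu-Xu:2011:DNLS}) requires such estimates. Your route could plausibly be carried out, but it would be substantially more technical work than is needed, and as written it is only a program, not a proof.

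Two further points you should repair. First, you invoke \cite{Wu2} as a black box to say that mass, energy and momentum control $\|I_Nv\|_{H^1}$; but $I_Nv$ is not a solution of any equation, so a well-posedness theorem for the flow cannot be applied to it. What is needed is a \emph{static} functional inequality valid for arbitrary $H^1$ functions $w$ with $\|w\|_2^2<4\pi$, namely $\|w_x\|_2^2\le 2E(w)+\big(P(w)^2+2\sqrt{\pi}|E(w)|\,\|w\|_2\big)\big(1-\tfrac{1}{2\sqrt{\pi}}\|w\|_2\big)^{-2}$; deriving this (via the sharp Gagliardo--Nirenberg inequality \eqref{eq:GN4} and a modulation $e^{i\alpha x}w$) is the content of the paper's Section 2 and is not contained verbatim in \cite{Wu2}, whose argument is by contradiction. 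Second, the inequality $\|v\|_{H^{1/2}}\lesssim N^{-1/2}\|I_Nv\|_{H^1}$ in your Step 3 is false for low frequencies; the correct bound is $\|v\|_{H^{1/2}}\lesssim\|I_Nv\|_{H^1}$, which is all you need (and, if you adopt the rescaling, one must also undo it, which costs a harmless $\lambda$-dependent factor on each finite time interval).
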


We explain the ideas of the proof of the theorem.  Inspired by \cite{Wu2}, we derive directly an apriori estimate using the conservation laws of mass, momentum and energy as well as the sharp Gagliardo-Nirenberg inequality, and thus provide a simplified proof of the result of \cite{Wu2}. We do not prove by contradiction and can get a clear bound of $H^1$-norm. Then we combine it with the I-method to prove the theorem.

\section{Apriori estimate}

To prove the theorem, it suffices to control the $H^{1/2}$-norm of the solution. For convenience, we use the following gauge transformation. If $u$ is a solution to \eqref{eq:DNLS} with $u_0\in H^{1/2}$, let
\begin{align}\label{eq:gauge}
v(t,x):=e^{-\frac{3}{4}i\int_{-\infty}^x |u(t,y)|^2\,dy}u(t,x).
\end{align}
Then $v$ solves
\begin{align}\label{eq:DNLS2}
i\partial_{t}v+\partial_{x}^{2}v=\frac i2|v|^2v_x-\frac i2v^2\bar{v}_x-\frac{3}{16}|v|^4v
\end{align}
with initial data $v(0,x)=v_0(x):=e^{-\frac{3}{4}i\int_{-\infty}^x |u_0|^2\,dy}u_0$.  It's easy to see the map $u\to v$ is a bijection in $H^{1/2}$. Indeed, by fractional Leibniz rule we get
\begin{align*}
\norm{D^{1/2} v}_{2}\les& \norm{D^{1/2} u}_2+\norm{uD^{1/2} [e^{-\frac{3}{4}i\int_{-\infty}^x |u(t,y)|^2\,dy}]}_2\\
\les& \norm{D^{1/2} u}_2+\norm{u}_{4} \norm{\partial_x [e^{-\frac{3}{4}i\int_{-\infty}^x |u(t,y)|^2\,dy}]}_{4/3}\les C(\norm{u}_{H^{1/2}}).
\end{align*}
From now on, we only consider the equation \eqref{eq:DNLS2} and we need to control the $H^{1/2}$-norm of $v$. 

Under the gauge transformation, the conservation laws reduce to: for solution $v$ of \eqref{eq:DNLS2} then
\begin{align}
M(v(t)):=&\|v(t)\|_{L^2_x}^2=
M(v_0),\qquad \mbox{(mass)}\\
P(v(t)):=&\mbox{Im} \int_{\R}\bar v(t) v_x(t)\,dx+\frac 14\int_{\R}
|v(t)|^4\,dx=P(v_0),\qquad \mbox{(momentum)}\\
E(v(t)):=&\|v_x(t)\|_{L^2_x}^2-\frac{1}{16}\|v(t)\|^6_{L^6_x}=
E(v_0).\qquad \mbox{(energy)}
\end{align}
We denote $\norm{\cdot}_p=\norm{\cdot}_{L_x^p}$ for $1\leq p\leq \infty$. By the sharp Galiardo-Nirenberg inequality
\begin{align}\label{eq:GN2}
\norm{f}_6^6\leq& \frac{4}{\pi^2}\norm{f}_2^{4}\norm{f_x}_2^{2},
\end{align}
then we get
\[E(v)\geq \|v_x\|_{2}^2(1-\frac{1}{4\pi^2}\norm{v}_2^4).\]
Thus under the condition \eqref{eq:massth1} we can get the apriori bound on $\norm{v}_{H^1}$.

However, as observed in \cite{Wu2} the momentum conservation for \eqref{eq:DNLS2} played a significant role. Inspired by \cite{Wu2} we derive directly a-priori estimate using the momentum and the following sharp GN inequality (see \cite{Agueh}):
\begin{align}\label{eq:GN4}
\norm{f}_6\leq& C_{GN}\norm{f}_4^{8/9}\norm{f_x}_2^{1/9},
\end{align}
where $C_{GN}=3^{\frac{1}{6}}(2\pi)^{-\frac{1}{9}}$.

\begin{lem}\label{lem:PuL4}
If $v\in H^1(\R)$ and $v\ne 0$, then
\begin{align}\label{eq:PuL4}
P(v)\geq& \frac{1}{4}\norm{v}_4^4(1-\frac{1}{2\sqrt{\pi}}\norm{v}_2)-\frac{4\sqrt{\pi}E(v)\norm{v}_2}{\norm{v}_4^4}.
\end{align}
\end{lem}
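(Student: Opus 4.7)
The plan is to decompose $v$ in polar form $v=Re^{i\phi}$ (with $R=|v|$), so that $\im(\bar v v_x)=R^2\phi_x$ and $|v_x|^2 = R_x^2 + R^2\phi_x^2$. Cauchy--Schwarz applied to $\int R^2|\phi_x|\,dx = \int R\cdot R|\phi_x|\,dx$ gives $|\im\int \bar v v_x\,dx| \leq \|v\|_2\|R\phi_x\|_2$, hence
\begin{equation*}
P(v) \geq \tfrac14\|v\|_4^4 - \|v\|_2\,\|R\phi_x\|_2.
\end{equation*}
Thus the lemma reduces to establishing the single inequality
\begin{equation*}
\|R\phi_x\|_2 \leq \frac{\|v\|_4^4}{8\sqrt\pi} + \frac{4\sqrt\pi\,E(v)}{\|v\|_4^4}.
\end{equation*}

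The heart of the proof is the sharp intermediate bound $\|R\phi_x\|_2^2 \leq E(v) + \|v\|_4^8/(64\pi)$. To obtain it, I will use the polar identity $\|R\phi_x\|_2^2 = \|v_x\|_2^2 - \|R_x\|_2^2 = E(v) + \tfrac{1}{16}\|v\|_6^6 - \|R_x\|_2^2$ and apply \eqref{eq:GN4} to $R=|v|$ (which belongs to $H^1(\R)$ with $\|R_x\|_2\leq\|v_x\|_2$ by the diamagnetic inequality). This yields $\|v\|_6^6 = \|R\|_6^6 \leq C_{GN}^6\,\|v\|_4^{16/3}\|R_x\|_2^{2/3}$, so the task reduces to computing $\max_{t\geq 0}(\alpha t^{2/3} - t^2)$ with $\alpha = \tfrac{C_{GN}^6}{16}\|v\|_4^{16/3}$. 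This elementary maximum equals $\tfrac{2\alpha^{3/2}}{3\sqrt 3}$, and substituting $C_{GN}^6 = 3(2\pi)^{-2/3}$ simplifies it to exactly $\|v\|_4^8/(64\pi)$. A byproduct is the a priori lower bound $E(v)\geq -\|v\|_4^8/(64\pi)$, which is needed in the last step.

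To finish, I will expand
\begin{equation*}
\Bigl(\frac{\|v\|_4^4}{8\sqrt\pi} + \frac{4\sqrt\pi E(v)}{\|v\|_4^4}\Bigr)^{\!2} = \frac{\|v\|_4^8}{64\pi} + E(v) + \frac{16\pi E(v)^2}{\|v\|_4^8} \geq \|R\phi_x\|_2^2.
\end{equation*}
The lower bound on $E$ forces $\tfrac{\|v\|_4^4}{8\sqrt\pi} + \tfrac{4\sqrt\pi E(v)}{\|v\|_4^4}\geq \tfrac{\|v\|_4^4}{16\sqrt\pi}>0$, so both sides are nonnegative and I can take square roots to conclude. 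The main difficulty is locating the sharp constant $1/(64\pi)$ in the intermediate bound: a crude Cauchy--Schwarz that replaces $\|R\phi_x\|_2$ by $\|v_x\|_2$, or an application of \eqref{eq:GN4} to $v$ rather than to $R$, destroys the exact cancellation with the subtractive term $-\|R_x\|_2^2$ in the identity above and fails to recover the coefficient $1/(8\sqrt\pi)$ that appears in the statement.
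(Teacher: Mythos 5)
Your proof is correct, and it takes a genuinely different route from the paper's. The paper modulates: it sets $u=e^{i\alpha x}v$, expresses $\im\int \bar v v_x\,dx$ through $E(u)$, $E(v)$, $M(v)$, bounds $E(u)$ from below by applying \eqref{eq:GN4} to $u$ (note $|u|=|v|$), maximizes $f(x)=(\tfrac1{16}-C_{GN}^{-18}x^2)x$ to extract the constant $\tfrac{1}{64\pi}$, and finally chooses $\alpha=\tfrac{1}{8\sqrt\pi}\|v\|_4^4\|v\|_2^{-1}$. You instead split off the angular part directly: your Cauchy--Schwarz on $\int R^2\phi_x\,dx$ plays the role of the paper's optimization in the modulation parameter $\alpha$, and your maximization of $\alpha t^{2/3}-t^2$ with $t=\|R_x\|_2$ (GN applied to $R=|v|$) is the dual of the paper's maximization of $f$; both produce the same sharp value $1/(64\pi)$ because both rest on \eqref{eq:GN4}. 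Your version has the advantage of isolating the clean intermediate estimate $\|R\phi_x\|_2^2\le E(v)+\|v\|_4^8/(64\pi)$ and the byproduct $E(v)\ge-\|v\|_4^8/(64\pi)$ (which you correctly use to guarantee positivity before taking square roots), at the cost of an extra algebraic step and of the polar decomposition itself, which the paper's modulation trick avoids entirely. One point to make airtight: for a general $H^1$ function the phase $\phi$ is not defined on $\{v=0\}$, so ``$R\phi_x$'' should be read as $w:=\im(\bar v v_x)/|v|$ on $\{v\neq0\}$ and $0$ elsewhere; then $\im(\bar v v_x)=Rw$ and $|v_x|^2=(\partial_x R)^2+w^2$ a.e.\ (using that $v_x=0$ a.e.\ on $\{v=0\}$ and the diamagnetic inequality you cite), after which your computation goes through verbatim.
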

\begin{proof}
Let $u=e^{i\al x}v(t,x)$ with $\al>0$ being determined later. Then
\[|u_x|^2=|v_x|^2+\al^2|v|^2+2\al \im v_x\bar v,\]
and thus
\[\int \im v_x\bar vdx=-\frac{E(v)}{2\al}-\frac{\al M(v)}{2}+\frac{E(u)}{2\al}.\]
Now by the sharp GN inequality we have
\begin{align*}
E(u)=&\norm{u_x}_2^2-\frac{1}{16}\norm{u}_6^6\\
\geq& C_{GN}^{-18}\norm{u}_6^{18}\norm{u}_4^{-16}-\frac{1}{16}\norm{u}_6^6\\
=&(C_{GN}^{-18} \norm{v}_6^{12}\norm{v}_4^{-16}-\frac{1}{16})\norm{v}_6^6.
\end{align*}
Thus,
\begin{align*}
P(v)\geq& -\bigg[\frac{1}{16}-C_{GN}^{-18} \norm{v}_6^{12}\norm{v}_4^{-16}\bigg]\frac{\norm{v}_6^6}{2\al}+\frac{\norm{v}_4^4}{4}-\frac{\al \norm{v}_2^2}{2}-\frac{E(v)}{2\al}\\
\geq&-f(\norm{v}_6^6\norm{v}_4^{-8})\frac{\norm{v}_4^{8}}{2\al}+\frac{\norm{v}_4^4}{4}-\frac{\al \norm{v}_2^2}{2}-\frac{E(v)}{2\al}
\end{align*}
where $f(x)=(\frac{1}{16}-C_{GN}^{-18}x^2)x$.  By calculus we know
\[\max_x f(x)=f(\frac{C_{GN}^9}{4\sqrt{3}})=\frac{C_{GN}^9}{96\sqrt{3}}=\frac{1}{64\pi}.\]
Therefore
\begin{align*}
P(v)\geq& -\frac{\norm{v}_4^{8}}{128\pi\al}+\frac{\norm{v}_4^4}{4}-\frac{\al \norm{v}_2^2}{2}-\frac{E(v)}{2\al}
\end{align*}
Take $\al=\frac{1}{8\sqrt{\pi}}\norm{v}_4^4\norm{v}_2^{-1}$, then
$P(v)\geq \frac{1}{4}\norm{v}_4^4(1-\frac{1}{2\sqrt{\pi}}\norm{v}_2)-\frac{E(v)}{2\al}$.
\end{proof}

\begin{lem}
If $v\in H^1(\R)$, $v\ne 0$ and $\norm{v}_2^2<4\pi$, then
\begin{align}\label{eq:H1bd}
\|v_x\|_{L^2}^2\le 2 E(v)+\frac{P(v)^2+2\sqrt{\pi}|E(v)|\norm{v}_2}{(1-\frac{1}{2\sqrt{\pi}}\norm{v}_2)^{2}}.
\end{align}
\end{lem}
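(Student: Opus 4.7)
The strategy is to prove the lemma in two steps, combining the momentum inequality of Lemma~\ref{lem:PuL4} with the sharp Gagliardo--Nirenberg inequality \eqref{eq:GN4}. Throughout, write $\beta := 1 - \|v\|_2/(2\sqrt{\pi})$, which is strictly positive by the hypothesis $\|v\|_2^2 < 4\pi$, and abbreviate $a := \|v\|_4^4$.

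First I will rewrite the conclusion of Lemma~\ref{lem:PuL4} as
\[\beta\,a^2 \leq 4\,P(v)\,a + 16\sqrt{\pi}\,E(v)\,\|v\|_2.\]
The cross term $4\,P(v)\,a$ on the right is absorbed into $a^2$ via the elementary inequality $4xy \leq (\beta/2)\,x^2 + (8/\beta)\,y^2$ (applied with $x = a$ and $y = P(v)$, valid for any sign of $P$ since $(\sqrt{\beta/2}\,x - \sqrt{8/\beta}\,y)^2 \geq 0$); bounding $E(v) \leq |E(v)|$ to cover the case $E<0$ then yields
\[a^2 \leq \frac{16\,P(v)^2}{\beta^2} + \frac{32\sqrt{\pi}\,|E(v)|\,\|v\|_2}{\beta}.\]

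Next I will convert this into a bound on $\|v_x\|_2^2$. From the energy identity $\|v_x\|_2^2 = E(v) + \|v\|_6^6/16$ and the rearranged form $\|v\|_6^6 \leq C_{GN}^6\,a^{4/3}\,\|v_x\|_2^{2/3}$ of \eqref{eq:GN4}, I will apply Young's inequality with conjugate exponents $(3/2, 3)$, tuning the scaling parameter so that precisely $\tfrac{1}{2}\|v_x\|_2^2$ is absorbed on the left-hand side. With $C_{GN} = 3^{1/6}(2\pi)^{-1/9}$ an explicit constant calculation gives
\[\|v_x\|_2^2 \leq 2\,E(v) + \frac{\sqrt{2}}{32\pi}\,a^2.\]

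Plugging the first-step bound on $a^2$ into this produces
\[\|v_x\|_2^2 \leq 2\,E(v) + \frac{\sqrt{2}\,P(v)^2}{2\pi\,\beta^2} + \frac{\sqrt{2}\,|E(v)|\,\|v\|_2}{\sqrt{\pi}\,\beta},\]
and the target \eqref{eq:H1bd} then follows from the elementary comparisons $\sqrt{2}/(2\pi) \leq 1$ and $\sqrt{2}/\sqrt{\pi} \leq 2\sqrt{\pi}$ together with the observation $1/\beta \leq 1/\beta^2$, valid because $0 < \beta \leq 1$. The main delicate point will be tracking the explicit constant through Young's inequality in the second step; however the target has enough slack (the $\beta$-vs-$\beta^2$ gap in the $|E|\,\|v\|_2$ term is absorbed by $\beta \leq 1$, and $\sqrt{2}/(2\pi)$ is comfortably below $1$), so a non-optimal Young's constant would still close the argument.
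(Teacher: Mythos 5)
Your proposal is correct and follows essentially the same route as the paper: treat the conclusion of Lemma~\ref{lem:PuL4} as a quadratic inequality in $\norm{v}_4^4$ (your AM--GM absorption of the cross term yields exactly the paper's bound \eqref{eq:L4bd}), then use \eqref{eq:GN4} with Young's inequality to absorb $\frac12\|v_x\|_2^2$ and combine. The only differences are cosmetic: the paper solves the quadratic via the root formula and uses the cruder constant $2^{-4}$ in place of your $\frac{\sqrt2}{32\pi}$.
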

\begin{proof}
Let $x=\norm{v}_4^4$. Then \eqref{eq:PuL4} gives a estimate of the form
\begin{align}\label{eq:PuL4v2}
c\geq ax-\frac{b}{x}.
\end{align}
with $a=\frac{1}{4}(1-\frac{1}{2\sqrt{\pi}}\norm{v}_2)$, $b=4\sqrt{\pi}|E(v)|\norm{v}_2$, $c=|P(v)|$. 
\eqref{eq:PuL4v2} implies \[(ax^2-cx-b)\leq 0.\]
Since $a>0$, thus we get
\[x^2\leq \Big(\frac{c+\sqrt{c^2+4ab}}{2a}\Big)^2\leq \frac{c^2+2ab}{a^2}.\]
Thus we obtain
\begin{align}\label{eq:L4bd}
\norm{v}_4^8\leq 16 (1-\frac{1}{2\sqrt{\pi}}\norm{v}_2)^{-2}\Big(P(v)^2+2(1-\frac{1}{2\sqrt{\pi}}\norm{v}_2)\sqrt{\pi}|E(v)|\norm{v}_2\Big).
\end{align}
Then by \eqref{eq:GN4} and mean value inequality we have
\begin{align}
\|v_x\|_{L^2}^2\le 2 E(v)+2^{-4}\|v\|_{L^4}^8.
\end{align}
Therefore by \eqref{eq:L4bd} we prove the lemma.
\end{proof}

With this lemma, we can get that if $v$ is a $H^1$-solution of \eqref{eq:DNLS2} satisfying \eqref{eq:massth2}, then $\norm{v_x}_2\leq C$. 
Therefore, global well-posedness of \eqref{eq:DNLS2} in $H^1$ under \eqref{eq:massth2} follows immediately.

\section{Proof of the main theorem}

In this section we prove Theorem \ref{thm:main} using the I-method as the previous works \cite{CKSTT-02-DNLS, Miao-Wu-Xu:2011:DNLS}.  The main difference is that we need to use the momentum conservation.

First we recall the definition of $I$-operator.
Let $N\gg 1$ be fixed, and the Fourier multiplier operator
$I_{N}$ be defined as
\begin{equation}
\widehat{I_Nf}(\xi)=m_N(\xi)\hat{f}(\xi).\label{I}
\end{equation}
Here $m_N(\xi)$ is a smooth, radially decreasing function
satisfying $0<m_N(\xi)\leq 1$ and
\begin{equation}
m_N(\xi)=\biggl\{
\begin{array}{ll}
1,&| \xi|\leq N,\\
N^\frac12| \xi|^{-\frac12},&| \xi|>2N.\label{m}
\end{array}
\end{equation}
For simplicity we denote $I_N$ by $I$ and $m_N$ by $m$
if there is no confusion. $I_N$ maps $H^\frac12$ to $H^1$, moreover, we have the following estimates,
\begin{equation}
    \|f\|_{H^\frac12}\lesssim \|I_Nf\|_{H^1}
\lesssim
    N^\frac12\|f\|_{H^{\frac{1}{2}}},\label{II}
\end{equation}
where the implicit constants are indenpendent on $N$. 

Next we use the rescaling. For $v_0\in H^{1/2}$, let $v$ be the solution to \eqref{eq:DNLS2}. For $\lambda>0$, let 
$$
v_\lambda=\lambda^{-\frac12} v(\frac{x}{\lambda}, \frac{t}{\lambda^2})\quad \mbox{ and } \quad
v_{0,\lambda}=\lambda^{-\frac12} v_0(\frac{x}{\lambda}).
$$
Then $v_\lambda$ is a solution of \eqref{eq:DNLS2} with the
initial data $v_\lambda(0)= v_{0,\lambda}(x)$. Meanwhile, $v_\lambda$ exists on
$[0,T]$  if and only if  $v$ exists
on $[0,\lambda^{-2}T]$. We have
\begin{align}\label{eq:L2norm}
\|Iv_{0,\lambda}\|_{2}\leq \|v_{0,\lambda}\|_{2}=\norm{v_0}_2
\end{align}
and
\begin{align}\label{eq:H1norm}
\|\p_xIv_{0,\lambda}\|_{2}\les N^{1/2}\la^{-1/2}\norm{v_0}_{\dot H^{1/2}}.
\end{align}
Thus choosing 
$$
\lambda\sim N,
$$
we can make
\begin{align}\label{Rescaling}
\|\partial_x Iv_{0,\lambda}\|_{2}\leq \e_0\ll 1
\end{align}
where $\e_0$ will be determined later. 

We recall a variant local well-posedness obtained in \cite{Miao-Wu-Xu:2011:DNLS}. 
\begin{lem} \label{prop:modified-local}
The Cauchy problem \eqref{eq:DNLS2} is
locally well-posed for the initial data $v_0$ satisfying $Iv_0\in
H^1(\R)$. Moreover, the solution exists on the interval $[0,\delta]$
with the lifetime
\begin{equation}
 \delta\sim \|Iv_0\|^{-\mu}_{H^1}\label{delta}
\end{equation}
for some $\mu>0$, where the implicit constant is independent of $N$. Furthermore, the
solution satisfies the estimate
\begin{equation}
\|Iv\|_{L^\infty((0,\delta);H^1)}\le 2\|Iv_0\|_{H^1}.\label{LSE}
\end{equation}
\end{lem}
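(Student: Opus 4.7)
The plan is to run a standard Picard iteration on the Duhamel formulation of \eqref{eq:DNLS2} at the level of $Iv$, in a time-localized Bourgain space $X^{1,b}_\delta$ with some $b>\frac12$. Setting
\[
\mathcal{N}(v):=\tfrac{i}{2}|v|^2v_x-\tfrac{i}{2}v^2\bar v_x-\tfrac{3}{16}|v|^4v,
\]
I would look for a fixed point of
\[
\Phi(v)(t):=e^{it\partial_x^2}v_0+\int_0^t e^{i(t-s)\partial_x^2}\mathcal{N}(v)(s)\,ds
\]
in the closed ball $\{v:\|Iv\|_{X^{1,b}_\delta}\le 2\|Iv_0\|_{H^1}\}$. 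The operator $I$ commutes with $e^{it\partial_x^2}$, so $Iv$ satisfies the same Duhamel identity with nonlinearity $I\mathcal{N}(v)$ and initial datum $Iv_0$.

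By the standard linear estimates in $X^{s,b}$ spaces -- the embedding $X^{1,b}_\delta\hookrightarrow C([0,\delta];H^1)$ for $b>\frac12$, the homogeneous bound for $e^{it\partial_x^2}Iv_0$, and the inhomogeneous bound with a time gain $\delta^\theta$ for some $\theta>0$ -- both the contraction and the a priori estimate \eqref{LSE} reduce to the multilinear bounds
\[
\|I(|v|^2 v_x)\|_{X^{1,b-1}_\delta}\lesssim \|Iv\|_{X^{1,b}_\delta}^3,\qquad \|I(|v|^4 v)\|_{X^{1,b-1}_\delta}\lesssim \|Iv\|_{X^{1,b}_\delta}^5,
\]
uniform in $N$, together with their Lipschitz counterparts. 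These are precisely the estimates established in \cite{Miao-Wu-Xu:2011:DNLS} for the gauged equation \eqref{eq:DNLS2} at regularity $s=\frac12$, so I would simply invoke them.

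The hard part is really producing those multilinear bounds. The standard recipe is a Littlewood-Paley decomposition, case analysis based on the relative sizes of the output frequency $\xi_0$ and the input frequencies, and the pointwise comparison $m(\xi_0)\lesssim m(\xi_\ast)$ where $\xi_\ast$ is the largest input frequency. The genuinely delicate case is the cubic term with a high-high interaction at the derivative factor producing a low output frequency: the gauge transformation \eqref{eq:gauge} was designed precisely to remove the resonant part of $|v|^2 v_x$, so only the non-resonant piece remains, and that piece admits a bilinear Strichartz/$L^4_{tx}$ gain large enough to absorb both the derivative and the loss in $m$. The quintic term $|v|^4 v$ carries no derivative and is controlled by Strichartz after distributing $m$ sub-multiplicatively across the five factors.

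Once those estimates are in hand the contraction closes provided $\delta^\theta\|Iv_0\|_{H^1}^{4}\ll 1$, which fixes $\delta\sim\|Iv_0\|_{H^1}^{-\mu}$ with $\mu=4/\theta$. Uniqueness and continuous dependence on the data are furnished by the same Lipschitz estimates, and \eqref{LSE} is inherited from the fixed-point ball.
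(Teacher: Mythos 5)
The paper does not prove this lemma at all---it is quoted directly from \cite{Miao-Wu-Xu:2011:DNLS}---and your outline is exactly the standard modified local well-posedness argument carried out there: Duhamel iteration for $Iv$ in Bourgain-type spaces, closed by the $N$-uniform trilinear and quintilinear estimates of that reference, so this is essentially the same route. The only fine print is that at this borderline regularity ($s=\tfrac12$ after undoing the interpolation to $Iv\in H^1$) the cited local theory works in an endpoint-type space with $b$ at or near $\tfrac12$ rather than a plain $X^{1,b}_\delta$ with $b>\tfrac12$, a technicality that your appeal to their multilinear estimates already absorbs.
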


By the above lemma, we need to control the growth of $\|Iv_\lambda(t)\|_{H^1}$. By mass conservation we have $\|Iv_\lambda(t)\|_{L_x^2}\leq \|v_\lambda\|_{L_x^2}\leq C$. It suffices to control $\|\partial_xIv_\lambda\|_{2}$. We will use \eqref{eq:H1bd} since $\|Iv_\lambda\|_{2}^2\leq \|v_\lambda\|_{2}^2=\norm{v_0}_2^2<4\pi$.
We define the modified momentum and energy as follows
\begin{align}
P_I(v_\lambda):=P(Iv_\lambda),\quad E_I(v_\lambda):=E(Iv_\lambda).
\end{align}
Then by \eqref{Rescaling}, H\"older's and Sobolev's inequalities,  we have 
$$
P_I(v_{0,\lambda})\lesssim 1;\quad 
E_I(v_{0,\lambda})\lesssim 1
$$
Moreover, 
$$
P(v_{0,\lambda})=\frac1\lambda P(v_0)\sim N^{-1}P(v_0).
$$
If $N\to \infty$, $I_N$ tends to the identity operator. Thus $P_I(v_\lambda)$ and $E_I(v_\lambda)$ increases slowly in $t$ if $N$ is large enough. Indeed, in the previous works the growth of $E_I(v_\lambda)$ was already studied. Collecting the results obtained in \cite{Miao-Wu-Xu:2011:DNLS} (see Section 7), we have 
\begin{lem}\label{lem:ME}
Suppose that for $T>0$
\begin{align}\label{assumption}
\sup\limits_{t\in [0,T]} \|Iv_\lambda\|_{H^1}\lesssim  1,
\end{align}
then the modified energy $E_I(v_\lambda)$ obeys the following estimate: there exists $C,\alpha>0$ such that for any $t\in [0,T]$ and any $\e>0$ 
\begin{align}\label{Est:ME}
|E_I(v_\lambda(t))|\le &\|\partial_xIv_{0,\lambda}\|_{L^2}^2+CN^{-\alpha}\sup\limits_{\tau\in [0,t]}\big(\|Iv_\lambda(\tau)\|_{H^1}^4+\|Iv_\lambda(\tau)\|_{H^1}^6\big)\notag\\
&\quad + Ct N^{-\frac52+\epsilon}\sup\limits_{\tau\in [0,t]}\big(\|Iv_\lambda(\tau)\|_{H^1}^6+\|Iv_\lambda(\tau)\|_{H^1}^{10}\big).
\end{align}
\end{lem}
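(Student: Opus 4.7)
The plan is to establish this as an I-method almost-conservation law, following the scheme of \cite{CKSTT-02-DNLS, Miao-Wu-Xu:2011:DNLS}. First, at time $t=0$ the bound is essentially free: since $E_I(v_{0,\lambda}) = \|\partial_x Iv_{0,\lambda}\|_2^2 - \frac{1}{16}\|Iv_{0,\lambda}\|_6^6$, and Gagliardo--Nirenberg combined with the mass control \eqref{eq:L2norm} gives $\|Iv_{0,\lambda}\|_6^6 \lesssim \|v_0\|_2^4\|\partial_x Iv_{0,\lambda}\|_2^2 \lesssim \|\partial_x Iv_{0,\lambda}\|_2^2$, we get $|E_I(v_{0,\lambda})| \lesssim \|\partial_x Iv_{0,\lambda}\|_2^2$. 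It therefore suffices to control the increment $|E_I(v_\lambda(t)) - E_I(v_{0,\lambda})|$ by the two error terms in \eqref{Est:ME}.

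The core step is to differentiate $E_I(v_\lambda) = E(Iv_\lambda)$ along the flow of \eqref{eq:DNLS2}. Because $E$ is exactly conserved for \eqref{eq:DNLS2}, the time derivative $\frac{d}{dt}E_I(v_\lambda)$ reduces to a multilinear commutator expression whose symbol on the Fourier side is a difference $1 - \prod m(\xi_j)$ (or a gradient thereof) and hence vanishes on the fully low-frequency region. Integrating in time,
\[
E_I(v_\lambda(t)) - E_I(v_{0,\lambda}) = \int_0^t \mathcal R_3(\tau)\, d\tau + \int_0^t \mathcal R_5(\tau)\, d\tau,
\]
where $\mathcal R_3$ is a quartic commutator coming from the derivative cubic nonlinearities $\tfrac{i}{2}|v|^2 v_x - \tfrac{i}{2} v^2 \bar v_x$, and $\mathcal R_5$ is a sextic commutator coming from $-\tfrac{3}{16}|v|^4 v$. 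Next, I would partition $[0,t]$ into local-in-time intervals $[t_k, t_{k+1}]$ of length $\delta\sim 1$ provided by Lemma \ref{prop:modified-local} together with \eqref{assumption}. On each such interval the DNLS-adapted Bourgain-space bound $\|Iv_\lambda\|_{X^{1,1/2}} \lesssim \|Iv_\lambda(t_k)\|_{H^1}$ holds, which feeds into the multilinear commutator estimates of \cite[Section 7]{Miao-Wu-Xu:2011:DNLS}. The cubic piece $\mathcal R_3$, via integration by parts in time (a symmetrization/correction-term argument), produces a \emph{telescoping} contribution bounded by $N^{-\alpha}(\|Iv_\lambda\|_{H^1}^4 + \|Iv_\lambda\|_{H^1}^6)$ with no accumulation in $k$. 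The quintic piece $\mathcal R_5$ is bounded in absolute value on each interval by $\delta\,N^{-5/2+\epsilon}(\|Iv_\lambda\|_{H^1}^6 + \|Iv_\lambda\|_{H^1}^{10})$, so summing over $\sim t/\delta$ intervals yields the $t\,N^{-5/2+\epsilon}$ term.

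The principal difficulty is the multilinear commutator estimates themselves --- extracting the precise smoothing gains $N^{-\alpha}$ (cubic) and $N^{-5/2+\epsilon}$ (quintic). These require dyadic frequency decomposition, a pointwise bound on the multiplier $1 - \prod m(\xi_j)$ exploiting that it vanishes unless some $|\xi_j|\gtrsim N$, resonance identities for the Schr\"odinger phase $\sum\xi_j^2$, and the refined bilinear / $L^6$ Strichartz estimates in Takaoka's gauge-transformed $X^{s,b}$ spaces needed to absorb the derivative in the cubic term. Since all these ingredients are already developed in \cite[Section 7]{Miao-Wu-Xu:2011:DNLS}, the present proof essentially reduces to quoting them; the only new content is the observation that the estimates pass naturally to the running supremum $\sup_{\tau\in[0,t]}\|Iv_\lambda(\tau)\|_{H^1}$ because they are monotone in this quantity on each time slice.
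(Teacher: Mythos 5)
Your proposal is correct and follows essentially the same route as the paper: the paper gives no independent proof of this lemma but simply collects the almost-conservation estimates for the modified energy from \cite[Section 7]{Miao-Wu-Xu:2011:DNLS}, which is exactly the I-method commutator/correction-term machinery you outline and then defer to. The only (harmless) imprecision is in the bookkeeping: in the cited argument the non-accumulating $N^{-\alpha}$ term arises as the pointwise difference between the first and second (resonantly corrected) modified energies, while both the quartic and the sextic commutators contribute to the accumulating $t\,N^{-\frac52+\epsilon}$ piece, rather than the cubic and quintic parts splitting cleanly as you describe.
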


On the modified momentum we have the following estimate. Indeed, since the momentum lies in the regularity of $H^{1/2}$, we can estimate it in a simple way.
\begin{lem}\label{lem:MM}
We have
$$
\big|P_I(v_\lambda)-P(v_{\lambda})\big|\lesssim N^{-1}\big(\|Iv_\lambda\|_{H^1}^2+\|Iv_\lambda\|_{H^1}^4\big)
$$
\end{lem}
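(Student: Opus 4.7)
The plan is to split $P_I(v_\la) - P(v_\la) = P(Iv_\la) - P(v_\la)$ into its quadratic and quartic pieces and estimate each separately, in both cases exploiting that the difference $I-\mathrm{Id}$ is Fourier-supported on $|\xi|>N$, where the multiplier $m$ satisfies $m(\xi)^2\xi^2\gtrsim N|\xi|$ (immediate from \eqref{m}).

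For the quadratic part, Plancherel's identity gives $\im\!\int\!\bar f f_x\,dx = \int \xi|\hat f(\xi)|^2\,d\xi$, so
\[
\im\!\int\!\overline{Iv_\la}\,(Iv_\la)_x\,dx - \im\!\int\!\bar v_\la (v_\la)_x\,dx = \int \xi\bigl(m(\xi)^2-1\bigr)|\hat v_\la(\xi)|^2\,d\xi,
\]
with integrand supported in $|\xi|>N$. Since $|m^2-1|\le 1$ and $|\xi|\lesssim N^{-1}m(\xi)^2\xi^2$ there, this piece is bounded by $N^{-1}\|\partial_x Iv_\la\|_{L^2}^2 \le N^{-1}\|Iv_\la\|_{H^1}^2$.

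For the quartic part, I would set $w:=v_\la - Iv_\la = (1-m(D))v_\la$ and use the algebraic identities $|a|^2-|b|^2 = \re((a-b)(\bar a+\bar b))$ and $|a|^4-|b|^4 = (|a|^2+|b|^2)(|a|^2-|b|^2)$, together with $|v_\la|\le |Iv_\la|+|w|$, to obtain the pointwise bound $\bigl||v_\la|^4-|Iv_\la|^4\bigr|\lesssim |w|\,|Iv_\la|^3 + |w|^4$. The two summands will then be handled by H\"older combined with the ``high-frequency gain'' estimates
\[
\|w\|_{L^2}\lesssim N^{-1}\|Iv_\la\|_{H^1}, \qquad \|w\|_{L^4}\lesssim N^{-3/4}\|Iv_\la\|_{H^1},
\]
both of which follow from the same comparison $m(\xi)^2\xi^2\gtrsim N|\xi|$ on $|\xi|>N$ (for the $L^4$ bound, combined with the one-dimensional Sobolev embedding $\dot H^{1/4}(\R)\hookrightarrow L^4(\R)$). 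Since $\|Iv_\la\|_{L^6}\lesssim \|Iv_\la\|_{H^1}$ by Sobolev embedding, these inputs yield $\int |w|\,|Iv_\la|^3\,dx + \int|w|^4\,dx\lesssim N^{-1}\|Iv_\la\|_{H^1}^4$, which when added to the quadratic piece produces the asserted inequality.

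The main obstacle is conceptual rather than computational: since $v_\la$ lies only in $H^{1/2}$ while $Iv_\la$ lies in $H^1$, any pointwise quantity involving $v_\la$ directly (such as $\|v_\la\|_{L^\infty}$ or $\|\partial_x v_\la\|_{L^2}$) cannot be controlled by $\|Iv_\la\|_{H^1}$, which is why a naive $L^\infty\!\times\!L^2$ splitting of $\int|w|^4\,dx$ using $\|w\|_{L^\infty}\le \|v_\la\|_{L^\infty}+\|Iv_\la\|_{L^\infty}$ fails. The $N^{-1}$ gain must therefore be extracted solely from the factor(s) of $w$; its Fourier support in $|\xi|>N$, coupled with the multiplier inequality $m^2\xi^2\gtrsim N|\xi|$, supplies precisely this decay in $L^p$ for $p<\infty$, and the rest is H\"older and Sobolev bookkeeping.
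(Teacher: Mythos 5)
Your proof is correct, and it takes a somewhat different route from the paper's, although both start from the same splitting into the quadratic and quartic parts of the momentum. For the quadratic part the paper factors the difference as $\im\int (I\partial_x v_\lambda-\partial_x v_\lambda)(I\bar v_\lambda+\bar v_\lambda)\,dx$ and pairs $\dot H^{-1/2}$ against $\dot H^{1/2}$ with $P_{\ge N}$ projections, gaining $N^{-1/2}$ from each factor; your direct Plancherel computation with the multiplier bound $|\xi(m(\xi)^2-1)|\lesssim N^{-1}m(\xi)^2\xi^2$ on $|\xi|>N$ gets the full $N^{-1}$ in one step and is cleaner. For the quartic part the paper writes the difference as $\int (Iv_\lambda-v_\lambda)P_{\ge N}(v_\lambda^3)\,dx$ plus similar terms and invokes the fractional Leibniz rule for $\langle\nabla\rangle^{1/2}(v_\lambda^3)$ in $L^{6/5}$ together with $\|v_\lambda\|_{H^{1/2}}\lesssim\|Iv_\lambda\|_{H^1}$, again gaining $N^{-1/2}$ twice; you instead use the pointwise bound $\bigl||v_\lambda|^4-|Iv_\lambda|^4\bigr|\lesssim |w|\,|Iv_\lambda|^3+|w|^4$ with $w=(1-m(D))v_\lambda$ and extract the decay entirely from the high-frequency factors via $\|w\|_{L^2}\lesssim N^{-1}\|Iv_\lambda\|_{H^1}$ and $\|w\|_{L^4}\lesssim N^{-3/4}\|Iv_\lambda\|_{H^1}$ (the latter by $\dot H^{1/4}\hookrightarrow L^4$), closing with H\"older and $H^1\hookrightarrow L^6$. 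Your argument avoids the fractional Leibniz rule and works entirely at the level of $Iv_\lambda$, at the mild cost of the elementary pointwise algebra; the paper's version is the more standard I-method bookkeeping and generalizes more readily to terms where the commuted factor cannot be isolated pointwise. All the multiplier inequalities you use do follow from the definition of $m$ in \eqref{m} together with its monotonicity on the intermediate region $N<|\xi|\le 2N$, so the estimates check out.
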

\begin{proof}
By the definition of momentum, we need to bound
\[\Big|\im\int_{\R}(I\bar v_\lambda \partial_xIv_\lambda-\bar v_\lambda \partial_xv_\lambda)\,dx\Big|+\Big|\int |Iv_\lambda|^4\,dx-\int |v_\lambda|^4\,dx\Big|:=I+II.\]
For the first term $I$, since 
\begin{align*}
\im\int_{\R}\Big(I\bar v_\lambda \partial_xIv_\lambda-\bar v_\lambda \partial_xv_\lambda\Big)\,dx
=\im \int_{\R}\Big(I\partial_x v_\lambda - \partial_x v_\lambda\Big)\big(I\bar v_\lambda+\bar v_\lambda\big)\,dx,
\end{align*}
and $P_{\le N}\big(I\partial_x v_\lambda - \partial_x v_\lambda\big)=0$, then we get 
\begin{align*}
I\lesssim &
\big\|I\partial_x v_\lambda - \partial_x v_\lambda\big\|_{\dot H^{-\frac12}}\Big(\big\|P_{\ge N}I\bar v_\lambda\big\|_{\dot H^{\frac12}}+\big\|P_{\ge N}\bar v_\lambda\big\|_{\dot H^\frac12}\Big).
\end{align*}
By the definition of $I$-operator, we have
\begin{align*}
\big\|I\partial_xv_\lambda-\partial_xv_\lambda\big\|_{\dot H^{-\frac12}}+\big\|P_{\ge N}I\bar v_\lambda\big\|_{\dot H^\frac12}+\big\|P_{\ge N}\bar v_\lambda\big\|_{\dot H^\frac12}&\lesssim N^{-\frac12}\|Iv_\lambda\|_{H^1},
\end{align*}
and thus
$$
I\lesssim N^{-1} \|Iv_\lambda\|_{H^1}^2.
$$

For the second term $II$, we have
\begin{align}\label{eq:PuII}
\int |Iv_\lambda|^4\,dx-\int |v_\lambda|^4\,dx=\int \big(Iv_\lambda-v_\lambda\big)\> P_{\ge N }( v_\lambda^3)\,dx+\mbox{similar terms}.
\end{align}
Using the H\"older inequality, the Sobolev's embedding, and the fractional Leibniz inequalities, we get
\begin{align*}
\Big|\int \big(Iv_\lambda-v_\lambda\big)\> P_{\ge N }( v_\lambda^3)\,dx\Big|
\lesssim & \|Iv_\lambda-v_\lambda\|_{6}\big\|P_{\ge N }( v_\lambda^3)\big\|_{\frac{6}{5}}\\
\lesssim & \|Iv_\lambda-v_\lambda\|_{H^\frac12}\>N^{-\frac12}\big\|\langle\nabla\rangle^{\frac12}( v_\lambda^3)\big\|_{\frac{6}{5}}\\
\lesssim & N^{-1}\|Iv_\lambda\|_{H^1}\>\big\|\langle\nabla\rangle^{\frac12}v_\lambda\big\|_{2}\|v_\lambda\|_{6}^2\\
\lesssim & N^{-1}\|Iv_\lambda\|_{H^1}\|v_\lambda\|_{H^\frac12}^3\\
\lesssim & N^{-1}\|Iv_\lambda\|_{H^1}^4.
\end{align*}
The similar terms in \eqref{eq:PuII} can be handled in the same way. Thus we prove the lemma.
\end{proof}

By Lemma \ref{lem:ME} and the mass conservation law $\norm{v_\lambda}_2\leq C$ we have under the assumption \eqref{assumption}
\begin{align}\label{Est:MEbd}
E_I(v_\lambda(t))\le &\|\partial_xIv_{0,\lambda}\|_{L^2}^2+CN^{-\alpha}\sup\limits_{\tau\in [0,t]}\big(\|\p_xIv_\lambda(\tau)\|_{2}^4+\|\p_xIv_\lambda(\tau)\|_{2}^6+1\big)\notag\\
&\quad +Ct N^{-\frac52+\epsilon}\sup\limits_{\tau\in [0,t]}\big(\|\p_xIv_\lambda(\tau)\|_{2}^6+\|\p_xIv_\lambda(\tau)\|_{2}^{10}+1\big).
\end{align}
Note that \eqref{Rescaling}. We will prove by continuity argument that for $T\le T_0:=N^{\frac52-2\epsilon}$, 
\begin{align}\label{eq:ASM}
\sup\limits_{t\in [0,T]}\|\partial_xIv_\lambda(t)\|_{2}\le 4\gamma_0\e_0,
\end{align}
where $\gamma_0=\sqrt{1+\frac{\sqrt{\pi}\norm{v_0}_2}{(1-\frac{1}{2\sqrt{\pi}}\norm{v_0}_2)^{2}}}$. We choose $\e_0\ll 1$ such that $100 \gamma_0\e_0<1$.

Assuming \eqref{eq:ASM}, we get that the
solution $v_\lambda$ exists on $[0,T_0]$.
Hence, $v$ exists on $[0,\lambda^{-2}T_0]$. Note that 
$$
\lambda^{-2}T_0\sim N^{-2}N^{\frac52-2\epsilon}= N^{\frac12-2\epsilon}.
$$
Therefore,we get that 
$v$ exists till arbitrarily large $T$ by choosing sufficient large $N$, and thus
completes the proof of Theorem \ref{thm:main}.

It remains to prove \eqref{eq:ASM}. We may assume $\sup\limits_{t\in [0,T]}\|\partial_xIv_\lambda(t)\|_{2}\le 3\e_0\ll 1$. Then the estimate \eqref{Est:MEbd} gives
\begin{align}
|E_I(v_\lambda(t))|\le \e_0^2+CN^{-\epsilon}, \quad t\leq T.
\end{align}
On the other hand, by Lemma \ref{lem:MM} we have
\begin{align}
|P_I(v_\lambda(t))|^2\le& 2|P_I(v_\lambda(t))-P(v_\lambda(t))|^2+2|P(v_\lambda(t))|^2
\le CN^{-2}.
\end{align}
By \eqref{eq:H1bd}, we have 
\begin{align*}
\big\|\partial_x Iv_\lambda(t)\big\|_{L^2}^2
\le& 2 E(Iv_\lambda(t))+\frac{P(Iv_\lambda(t))^2+2\sqrt{\pi}|E(Iv_\lambda)|\norm{v_0}_2}{(1-\frac{1}{2\sqrt{\pi}}\norm{v_0}_2)^{2}}\\
\le& 2 \gamma_0^2 (\e_0^2+CN^{-\epsilon})+CN^{-2}(1-\frac{1}{2\sqrt{\pi}}\norm{v_0}_2)^{-2}.
\end{align*}
Choosing $N$ sufficiently large, we obtain \eqref{eq:ASM} as desired.

\end{document}